\newtheorem{theorem}{Theorem}[section]
\newtheorem{corollary}[theorem]{Corollary}
\newtheorem{lemma}[theorem]{Lemma}
\newtheorem{proposition}[theorem]{Proposition}
\theoremstyle{definition}
\newtheorem{definition}[theorem]{Definition}
\newtheorem{example}[theorem]{Example}
\numberwithin{equation}{section}
\newcommand{\X}{\mathscr{X}}
\newcommand{\D}{\mathbb{D}}
\newcommand{\SA}{\mathscr{S\mspace{-5mu}A}}
\newcommand{\SApO}{\mathscr{S\mspace{-5mu}A}_\psi (\Omega)}
\newcommand{\CB}{\mathcal{C}_b(\Psi)}
\begin{document}


\baselineskip=17pt


\title{Schur-Agler class and Carath\'eodory extremal functions}

\author[A. Biswas]{Anindya Biswas}
\address{Department of Mathematics and Statistics, Masaryk University, Brno}
\email{anindyab132652@gmail.com}
\address{ORCID: 0000-0002-7805-9446}
\thanks{}

\date{}

\begin{abstract}
	
	We study the role of Carathéodory extremal functions in the Schur–Agler class $\SA_\psi(\Omega)$ associated with a family \(\Psi\) of test functions on a Carath\'eodory hyperbolic domain $\Omega\subset\mathbb{C}^m$.
	The paper makes three main contributions. 
	First, we introduce a function $d_\Psi$ using the Schur-Agler class $\SA_\Psi(\Omega)$ analogous to the definition of Carath\'eodory pseudodistance in terms of the Schur class and show that $d_\Psi$ can be computed using only the test functions. Second, we prove that every Carathéodory hyperbolic domain admits a family $\Psi$ of test functions such that $\SA_\Psi(\Omega)$ coincides with Schur class. Third, we prove that if a domain possesses a finite family of test functions that generates the entire Schur class, then the domain must be biholomorphic either to the open unit disc $\mathbb{D}$ or to the bidisc $\mathbb{D}^2$. As applications we characterise the possible values of Carathéodory extremals on the Drury–Arveson space and we obtain an operator‑theoretic Herglotz representation formula.
\end{abstract}

\subjclass[2020]{Primary 47A48, 32F45; Secondary 32E30}

\keywords{Schur-Agler class, test function, Carath\'eodory extremals}

\maketitle

\section{Introduction.}

Suppose that $\Omega$ is a domain in a finite dimensional complex space $\mathbb{C}^m$ such that the bounded holomorphic functions separate the points of $\Omega$. Such a domain is called Carath\'eodory hyperbolic. For the well-studied domains the open unit disc $\D$ and the bidisc $\D^2$, any element of the Schur class (the closed unit ball of the Banach algebra of all bounded holomorphic functions on $\Omega$, denoted by $\mathscr{S}(\Omega)$) has a structure known as the \textit{realization formula}, which states that given any $f\in \mathscr{S}(\Omega)$, there is a Hilbert space $\X$, a unitary operator $U:\mathbb{C}\oplus \X\rightarrow \mathbb{C}\oplus \X$ and an operator $z\in B(\X)$ which is identified with the identity function on $\Omega$ such that with $$ U =
\bordermatrix{ & \mathbb{C} & \X \cr
	\mathbb{C} & a & B \cr
	\X & C & D},
$$ one can express $f$ as $f(z)=a+Bz(I_{\X}-Dz)^{-1}C$ for all $z\in \Omega$.

In the theory of interpolation, the realization formula has certain important implications. For instance, the realization formula leads to the fact that every solvable Pick interpolation problem on $\D^2$ has a rational inner solution (Corollary 2.13 in \cite{A-M-Paper}, Corollary 11.54 in \cite{A-M}). Due to such importance, the property of having a realization formula has been investigated further and it has been found that in the case of the annulus (Theorem 4.4 in \cite{B_H}, Proposition 5.2 in \cite{D-M}) and the symmetrized bidisc (page 510 in \cite{B-S}) similar form can be established, though not as convenient as in the case of $\D$ and $\D^2$, but still provide useful and far reaching consequences (\cite{BhatBisCha}). 

In the case of $\D$ and $\D^2$, in a certain sense, only a finite number of elementary functions can generate the entire Schur class, namely, the identity function on $\D$ and the coordinate functions on $\D^2$, although this is not the case for the other domains. For the annulus and the symmetrized bidisc, one requires infinitely many functions to provide a realization formula, and in this article we will see that beyond the domains $\D$ and $\D^2$, it is indeed necessary to consider such an infinite collection.

Generalizing the idea that a certain collection of functions has the property of generating the whole Schur class leads us to a notion known as the \textit{Schur-Agler class}, or more precisely, the \textit{$\Psi$-Schur-Agler class} for a given collection of functions $\Psi$ (\cite{B_H}, \cite{BhatBisCha}, \cite{D-M}). We call such a collection $\Psi$ a set of test functions and they are introduced in the following way.
\begin{definition}
	A collection $\Psi$ of $\mathbb{C}$-valued functions on $\Omega$ is called a set of \textit{test functions} if the following conditions hold.
	\begin{enumerate}
		\item $\sup\{|\psi(z)|:\psi\in \Psi\}<1$, for each $z\in \Omega$.
		\item For each finite subset $F$ of $\Omega$, the collection $\{\psi |_F:\psi\in \Psi\}$, together with the constant function, generates the algebra of all $\mathbb{C}$-valued functions on $F$.
	\end{enumerate}
\end{definition}

We can topologize $\Psi$ by considering it as a subspace of $\overline{\mathbb{D}}^\Omega$ equipped with the product topology. For every $z\in \Omega$, we define an element $E(z)$ in $\mathcal{C}_b(\Psi)$, {\em the $C^*$-algebra of all bounded continuous functions on} $\Psi$, such that $E(z)(\psi)=\psi(z)$. Clearly,
\begin{align}\label{NormOfEz}
	\| E(z)\|= \text{sup}_{\psi\in \Psi} |\psi(z)|<1
\end{align}
for each $z\in \Omega$. These functions $E(z)$ play a key role in the development of the theory of $\Psi$-Schur-Agler class.

Given such a set $\Psi$, we next introduce a collection of positive semidefinite functions or kernels $k:\Omega\times \Omega\rightarrow \mathbb{C}$ satisfying the property
\begin{align*}
	\big((1-\psi(z)\overline{\psi(w)})k(z,w)\big)_{z,w\in \Omega}\geq \mathbf{0},\,\ \text{for all}\,\, \psi\in \Psi.
\end{align*}
These functions $k$ are called the $\Psi$-admissible kernels and we will denote the set of all $\Psi$-admissible kernels by $\mathcal{K}_\Psi$.

Now we can describe the $\Psi$-Schur-Agler class. 
\begin{definition}
	Given a collection of test functions $\Psi$ on a domain $\Omega$, the $\Psi$-Schur-Agler class is the collection of functions $f:\Omega\rightarrow \mathbb{C}$ for which
	\begin{align*}
		\big((1-f(z)\overline{f(w)})k(z,w)\big)_{z,w\in \Omega}\geq \mathbf{0},\,\ \text{for all}\,\, k\in\mathcal{K}_\Psi.
	\end{align*}
\end{definition}
The $\Psi$-Schur-Agler class on $\Omega$ will be denoted by $\SA_\psi(\Omega)$, or $\SA_\Psi$ when the underlying domain is clear from the context.

One should note that, we do not require $\Psi$ to be a subset of $\mathscr{S}(\Omega)$, but then $\SA_\psi(\Omega)$, which also contains $\Psi$, is not a subset of $\mathscr{S}(\Omega)$. On the other hand, if we take $\Psi$ from $\mathscr{S}(\Omega)$, $\SA_\psi(\Omega)$ is then contained in $\mathscr{S}(\Omega)$ and it inherits a number of important properties like power series expansion and Heine–Borel property from $\mathscr{S}(\Omega)$. Also, we observe that, since to check positivity of $\big((1-f(z)\overline{f(w)})k(z,w)\big)_{z,w\in \Omega}$ one needs to consider only finitely many points from $\Omega$ at a time, $\SA_\psi(\Omega)$ is closed under pointwise convergence. For the development with holomorphic test functions, an interested reader may consult \cite{BhatBisCha}. The main result providing the structure of $\SA_\psi(\Omega)$ is the following (see Theorem 1.3 in \cite{B_H}, Theorem 2.1 in \cite{BhatBisCha}, Theorem 2.2 in \cite{D-M}).
\begin{theorem}\label{CharacterSchurAgler}
	For a function $f:\Omega \rightarrow \overline{\D}$, the following statements are equivalent.
	\begin{enumerate}
		\item $f\in \SApO$.
		\item $f$ has an Agler decomposition, that is, there exists a completely
		positive kernel $\Gamma:\Omega\times\Omega\rightarrow\CB^*$ such that 
		\begin{align}\label{AlterAglerDecomp}
			1-f(z)\overline{f(w)}=\Gamma(z,w)(1-E(z)E(w)^*)\,\,\,\text{for all $z,w\in \Omega$.}
		\end{align}
		\item\label{Realization} There is a Hilbert space $\X$, a unitary $U:\mathbb{C}\oplus \X\rightarrow \mathbb{C}\oplus \X$ and a unital $*$-representation $\rho:\CB\rightarrow B(\X)$ such that writing $U$ as 
		\begin{align*}
			U =
			\bordermatrix{ & \mathbb{C} & \X \cr
				\mathbb{C} & a & B \cr
				\X & C & D},
		\end{align*}
		one can express $f$ as $f(z)=a+B\rho(E(z))(I_{\X}-D\rho(E(z)))^{-1}C$ for all $z\in \Omega$.
		\item There is a Hilbert space $\X$, a unital $*$-representation $\rho:\CB\rightarrow B(\X)$ and a map $h:\Omega\rightarrow \X$ such that 
		\begin{align}\label{AglerDecompInner}
			1-\overline{f(z)}f(w)=\langle \rho(1-E(z)^* E(w))h(w),h(z)\rangle_\X \,\,\,\text{for all $z,w\in \Omega$.}
		\end{align}
		
	\end{enumerate}
\end{theorem}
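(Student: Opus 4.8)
The plan is to prove the four conditions equivalent by establishing the cycle $(1)\Rightarrow(2)\Rightarrow(4)\Rightarrow(3)\Rightarrow(1)$, in which only the first arrow requires real work; the remaining three use standard tools. For $(2)\Rightarrow(4)$ I would apply the Kolmogorov (Stinespring-type) decomposition for completely positive $\CB^{*}$-valued kernels to the kernel $\Gamma$ of \eqref{AlterAglerDecomp}, obtaining a Hilbert space $\X$, a unital $*$-representation $\rho:\CB\to B(\X)$ and a map $h:\Omega\to\X$ with $\Gamma(z,w)(b)=\langle\rho(b)h(w),h(z)\rangle_{\X}$; substituting into \eqref{AlterAglerDecomp}, conjugating, relabelling, and using that $\CB$ is \emph{commutative} (so that $E(z)E(w)^{*}=E(w)^{*}E(z)$ and $\rho$ is multiplicative on such products) turns the identity into \eqref{AglerDecompInner}. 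For $(4)\Rightarrow(3)$ I would run the lurking-isometry argument: since $\rho$ is a $*$-representation of a commutative algebra, $\rho(E(z))^{*}\rho(E(w))=\rho(E(z)^{*}E(w))$, and \eqref{AglerDecompInner} rearranges to
\[
\left\langle\begin{pmatrix}1\\\rho(E(z))h(z)\end{pmatrix},\begin{pmatrix}1\\\rho(E(w))h(w)\end{pmatrix}\right\rangle_{\mathbb{C}\oplus\X}=\left\langle\begin{pmatrix}f(z)\\h(z)\end{pmatrix},\begin{pmatrix}f(w)\\h(w)\end{pmatrix}\right\rangle_{\mathbb{C}\oplus\X}
\]
for all $z,w\in\Omega$, so the assignment $(1,\rho(E(z))h(z))\mapsto(f(z),h(z))$ is an isometry between the two closed subspaces of $\mathbb{C}\oplus\X$ spanned by these vectors. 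Enlarging $\X$ (extending $\rho$ by a unital $*$-representation on the added summand) so that the two orthogonal complements have the same dimension, this extends to a unitary $U:\mathbb{C}\oplus\X\to\mathbb{C}\oplus\X$; writing $U$ in block form with entries $a,B,C,D$ and solving $(f(z),h(z))=U(1,\rho(E(z))h(z))$ gives $h(z)=(I_{\X}-D\rho(E(z)))^{-1}C$ — the inverse exists because $\|\rho(E(z))\|\le\|E(z)\|<1$ by \eqref{NormOfEz} — and hence $f(z)=a+B\rho(E(z))(I_{\X}-D\rho(E(z)))^{-1}C$, i.e.\ \eqref{Realization}.

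For $(3)\Rightarrow(1)$ I would first unfold the realization formula (the computation above, read backwards, using only unitarity of $U$ and $\|D\rho(E(z))\|<1$) to get $1-f(z)\overline{f(w)}=\langle\rho(1-E(z)E(w)^{*})h(z),h(w)\rangle_{\X}$ with $h(z)=(I_{\X}-D\rho(E(z)))^{-1}C$. Now fix $k\in\K_\Psi$, points $z_{1},\dots,z_{n}\in\Omega$ and scalars $c_{1},\dots,c_{n}$, and let $\mathcal{A}\subseteq\CB$ be the unital $C^{*}$-subalgebra generated by $E(z_{1}),\dots,E(z_{n})$; it is commutative with maximal ideal space homeomorphic to the compact set $K:=\overline{\{(\psi(z_{1}),\dots,\psi(z_{n})):\psi\in\Psi\}}\subseteq\overline{\D}^{\,n}$, on which $E(z_{i})$ is the $i$-th coordinate. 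Restricting $\rho$ to $\mathcal{A}$ and applying the spectral theorem produces a projection-valued measure $P$ on $K$ with
\[
\sum_{i,j}c_{i}\overline{c_{j}}\bigl(1-f(z_{i})\overline{f(z_{j})}\bigr)k(z_{i},z_{j})=\int_{K}\sum_{i,j}c_{i}\overline{c_{j}}\,(1-\lambda_{i}\overline{\lambda_{j}})\,k(z_{i},z_{j})\,d\langle P(\lambda)h(z_{i}),h(z_{j})\rangle.
\]
For each $\lambda\in K$ the matrix $\bigl((1-\lambda_{i}\overline{\lambda_{j}})k(z_{i},z_{j})\bigr)_{i,j}$ is positive semidefinite, being a limit over $\psi\in\Psi$ of the matrices $\bigl((1-\psi(z_{i})\overline{\psi(z_{j})})k(z_{i},z_{j})\bigr)_{i,j}\ge\mathbf{0}$ (positive because $k$ is $\Psi$-admissible); and the matrix-valued measure $E\mapsto(\langle P(E)h(z_{i}),h(z_{j})\rangle)_{i,j}$ is positive semidefinite, so its density against a dominating scalar measure is a.e.\ a positive semidefinite matrix. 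The integrand is therefore, a.e., a value of a Schur product of two positive semidefinite matrices evaluated against $c\otimes\overline{c}$, hence nonnegative, and integrating gives $f\in\SApO$, closing the cycle.

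The main obstacle is $(1)\Rightarrow(2)$, which I would prove by a Hahn--Banach separation argument over finite subsets followed by a compactness limit. For finite $F\subseteq\Omega$ let $\mathcal{C}_{F}$ be the set of Hermitian $F\times F$ matrices $\bigl(\Lambda(z,w)(1-E(z)E(w)^{*})\bigr)_{z,w\in F}$ with $\Lambda$ a completely positive $\CB^{*}$-valued kernel on $F$; this is a convex cone, and it is closed (normalize $\sum_{z\in F}\Lambda(z,z)(1)=1$; by Cauchy--Schwarz the $\Lambda$'s then range over a weak-$*$-compact set which, since $\Lambda(z,z)(1-E(z)E(z)^{*})\ge(1-\|E(z)\|^{2})\Lambda(z,z)(1)$ and \eqref{NormOfEz}, misses the zero kernel, so $\mathcal{C}_{F}$ is a cone over a compact base). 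If $\bigl(1-f(z)\overline{f(w)}\bigr)_{z,w\in F}\notin\mathcal{C}_{F}$, Hahn--Banach yields a Hermitian $P$ with $\langle P,M\rangle\ge0$ on $\mathcal{C}_{F}$ and $\langle P,(1-f(z)\overline{f(w)})_{z,w\in F}\rangle<0$; testing the first inequality against the completely positive kernels $\Lambda(z,w)(b)=\xi(z)\overline{\xi(w)}\chi(b)$, where $\xi\in\mathbb{C}^{F}$ and $\chi$ ranges over the characters of $\CB$ (whose tuples $(\chi(E(z)))_{z\in F}$ exhaust $K$), forces $P$, up to the usual conjugation, to be a $\Psi$-admissible kernel on $F$, and the second inequality then contradicts $f\in\SApO$. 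Hence $\bigl(1-f(z)\overline{f(w)}\bigr)_{z,w\in F}\in\mathcal{C}_{F}$ for every finite $F$. Finally, for each $F$ the set of completely positive $\CB^{*}$-valued kernels $\Gamma$ on $\Omega$ with $\Gamma(z,z)(1)\le(1-\|E(z)\|^{2})^{-1}$ for all $z$ and $\Gamma(z,w)(1-E(z)E(w)^{*})=1-f(z)\overline{f(w)}$ on $F$ is nonempty (extend the finite-step kernel by zero) and closed in the Tychonoff product of weak-$*$-compact balls of $\CB^{*}$; these sets are nested over the directed family of finite subsets, so by compactness their intersection is nonempty, and any member gives the Agler decomposition \eqref{AlterAglerDecomp}, i.e.\ (2). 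I expect the delicate points to be the closedness of $\mathcal{C}_{F}$ and the precise verification that the separating $P$ is genuinely $\Psi$-admissible — pinning down the conjugation/transposition conventions, and checking that characters of $\CB=\mathcal{C}_{b}(\Psi)$ suffice where one might worry one needs the points of $\Psi$ themselves.
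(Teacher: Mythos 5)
The paper does not prove this theorem; it is quoted from \cite{B_H}, \cite{BhatBisCha} and \cite{D-M}. Your proposal correctly reconstructs the standard argument from those sources --- cone separation plus a weak-$*$ compactness limit for $(1)\Rightarrow(2)$, the Kolmogorov decomposition \eqref{Kolmogorov Decomp} for $(2)\Rightarrow(4)$, a lurking isometry for $(4)\Rightarrow(3)$, and the spectral-theorem/Schur-product positivity argument for $(3)\Rightarrow(1)$ --- so it matches the proof the paper relies on, with the genuinely delicate points (closedness of the cone $\mathcal{C}_F$ via a compact base missing the origin, and the conjugation conventions in the separation step) correctly identified and handled.
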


By the completely positive kernel $\Gamma$ in (\ref{AlterAglerDecomp}) we mean that for any $z_1,\ldots,z_n\in \Omega$, $a_1,\ldots,a_n\in\CB$, the matrix $\big(\Gamma(z_i,z_j)(a^* _i a_j)\big)_{1\leq i,j\leq n}$ is positive semidefinite. It is known (\cite{B-B-F-t}, \cite{B_H}, \cite{B-B-L-S}) that, for any such $\Gamma$, there is a Hilbert space $\X$, a unital $*$-representation $\rho:\CB\rightarrow B(\X)$ and a map $g:\Omega\rightarrow \X (=B(\X,\mathbb{C}))$ such that
\begin{align}\label{Kolmogorov Decomp}
	\Gamma(z,w)(\delta)=\langle \rho(\delta)g(w)^*,g(z)^*\rangle.
\end{align}

In this article, we will always assume that our collection $\Psi$ of test functions is a subset of $\mathscr{S}(\Omega)$ and that there is a point $w_0\in \Omega$ such that $\psi(w_0)=0$ for all $\psi\in \Psi$. As a consequence, $\SA_\Psi\subset\mathscr{S}(\Omega)$ and $E(w_0)=0$. It is known that this assumption does not alter the $\SA_\Psi$ (see Section 3 in \cite{BhatBisCha}).

We now recall the Carath\'eodory pseudodistance on $\Omega$. Given two points $z_1,z_2\in \Omega$, it is defined by 
\begin{align}\label{CaraDefn}
	c_\Omega ^*(z_1,z_2)=sup\Big\{m(f(z_1),f(z_2))=\Big|\frac{f(z_1) -f(z_2)}{1- f(z_1) \overline{f(z_2)}}\Big|:f\in \mathscr{S}(\Omega), f(\Omega)\subset \D\Big\}.
\end{align}
It is known that the Carath\'eodory pseudodistance is always attained and the functions that attain it are called \textit{Carath\'eodory extremals}. If a collection $\mathcal{C}$ of functions in $\mathscr{S}(\Omega)$ has the property that the Carath\'eodory pseudodistance between any two points in $\Omega$ is attained by some function in $\mathcal{C}$, we call this collection  a \textit{universal set} for the \textit{Carath\'eodory extremal problem}. It turns out, if one imposes a minimality condition on $\mathcal{C}$, then in some cases the collection is uniquely determined. Here are a few examples of domains with a minimal $\mathcal{C}$ (Section 5 in \cite{AglLykYouCharac}).

\begin{example}
	\begin{enumerate}
		\item For the open unit disc $\D$, the Carath\'eodory extremal is the identity function on $\D$ and it is unique upto composition with automorphisms of $\D$.
		\item In the case of bidisc, the extremals are  the coordinate functions and again they are unique (Lemma \ref{Uniqueness_Cara_Ext_Poly} in Section \ref{FinitenessOf}). 
		\item On the symmetrized bidisc $\mathbb{G}=\{(z_1+z_2,z_1z_2):z_1,z_2\in \D\}$, the collection is given by (Theorem 1.1 in \cite{AglerYounHyperbolic}, Theorem Theorem 7.1.1 in \cite{J-P-Invariant})
		\begin{align*}
			\{\mathbb{G}\ni(s,p)\mapsto \frac{2zp-s}{2-zs}\in \D:z\in \overline{\D}\}.
		\end{align*}
	\end{enumerate}
\end{example}
It turns out, the collections we described above serve as test functions that generate the entire Schur class (Theorem 11.13 in \cite{A-M}, page 510 in \cite{B-S}). Therefore, it is natural to ask whether the Carath\'eodory extremals have any role in the development of the Schur-Agler class.

Analogous to the definition of $c^* _\Omega$, we now consider the following: Suppose that $z_1, z_2\in \Omega$ and define
\begin{align}\label{invariant_distance}
	d_\Psi (z_1,z_2)=\sup \Big\{ m(f(z_1), f(z_2))=\Big|\frac{f(z_1) -f(z_2)}{1- f(z_1) \overline{f(z_2)}}\Big|:f\in \SA_\Psi\Big\}.
\end{align}
Since $\SA_\Psi$ is contained in the Schur class, using Montel's theorem, we can say that there is an $f\in \SA_\Psi$ such that $d_\Psi (z_1,z_2)=m(f(z_1), f(z_2))$.

Our first two results show that $d_\Psi$ has an expression in terms of the admissible kernels and is attained by the test functions in $\Psi$. So if we want to have $\SA_\Psi =\mathscr{S}(\Omega)$, then $\Psi$ needs to be a universal set for the Carath\'eodory extremal problem in $\Omega$.

When the domain $\Omega$ is the polydisc $\D^m$, we have proved that a minimal universal set contains exactly $m$ functions (upto composition with automorphisms of $\D$), namely, the coordinate functions. But this does not hold in general, that is, a minimal universal set for the Carath\'eodory extremal problem is generically infinite. Thus, the problem of generating the Schur class with finitely many $\mathbb{C}$-valued test functions has very little hope. However, the importance of the realization formula asks for further development and we have provided, for any Carath\'eodory hyperbolic domain $\Omega$, a collection $\Psi$ of test functions such that $\SA_\Psi=\mathscr{S}(\Omega)$.

It is known that upto some extent the Carath\'eodory extremals carry the data sufficient to characterize the underlying domain (\cite{AglLykYouCharac}, Theorem 4.4 in \cite{KosinZwoExt}) and if $\mathscr{S}(\Omega)$ can be generated by a finite collection of test functions, then we may expect to recover the domain from the properties of the test functions. Our main result in Section \ref{FinitenessOf} proves exactly this. We show that under certain natural conditions, if $\SA_\Psi=\mathscr{S}(\Omega)$ holds with a finite collection $\Psi$, then $\Omega$ is necessarily biholomorphic to either $\D$ or $\D^2$.

Lastly, we give two applications of our results. The first one, comes from the observation that the reproducing kernel of the Drury-Arveson space is closely related to the Carath\'eodory pseudodistance on the Euclidean ball, describes the range of the Carath\'eodory extremals in the closed unit ball of the multiplier algebra of the Drury-Arveson space. The second application utilizes the test function we have introduced and gives an operator-theoretic Herglotz representation for any Carath\'eodory hyperbolic domain.
\section{Extremal functions and $\Psi$}\label{FinitenessOf}
We start with a description of $d_\Psi$ in terms of he admissible kernel.

\begin{theorem}
	\begin{align}\label{d_in_terms_of_adm_kernels}
		d_\Psi (z_1,z_2)=\inf& \Big\{\sqrt{1-\frac{|k(z_1, z_2)|^2}{k(z_1,z_1)k(z_2,z_2)}}: k(z_1,z_1)\neq 0\neq k(z_2,z_2),\\
		& k\,\text{is an admissible kernel, i.e., }k\in \mathcal{K}_\Psi, \Big\}.\notag
	\end{align}
\end{theorem}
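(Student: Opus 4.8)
The plan is to prove the two inequalities separately, with the ``$\leq$'' direction coming from the realization/Agler-decomposition machinery and the ``$\geq$'' direction coming from a direct estimate using the admissible kernel that witnesses the infimum.

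For the inequality $d_\Psi(z_1,z_2)\geq \sqrt{1-|k(z_1,z_2)|^2/(k(z_1,z_1)k(z_2,z_2))}$ for every admissible $k$ with nonvanishing diagonal at $z_1,z_2$: I would fix $f\in\SA_\Psi$ and unwind the definition of $\SA_\Psi$ at the two points $z_1,z_2$. Since $k\in\K_\Psi$, the $2\times 2$ matrix $\big((1-f(z_i)\overline{f(z_j)})k(z_i,z_j)\big)_{i,j=1,2}$ is positive semidefinite. Writing out the determinant condition for this $2\times2$ positive matrix gives
\begin{align*}
(1-|f(z_1)|^2)(1-|f(z_2)|^2)k(z_1,z_1)k(z_2,z_2)\geq |1-f(z_1)\overline{f(z_2)}|^2\,|k(z_1,z_2)|^2.
\end{align*}
Dividing through by $|1-f(z_1)\overline{f(z_2)}|^2\,k(z_1,z_1)k(z_2,z_2)$ (all positive, using $f\in\mathscr{S}(\Omega)$ so $|f|<1$ on $\Omega$ — here I'd handle the degenerate case $|f(z_i)|=1$ separately, which forces $f$ constant) and recognizing that $1-m(f(z_1),f(z_2))^2 = (1-|f(z_1)|^2)(1-|f(z_2)|^2)/|1-f(z_1)\overline{f(z_2)}|^2$, we obtain $m(f(z_1),f(z_2))^2\geq 1 - |k(z_1,z_2)|^2/(k(z_1,z_1)k(z_2,z_2))$. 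Taking the supremum over $f\in\SA_\Psi$ gives $d_\Psi(z_1,z_2)^2\geq 1 - |k(z_1,z_2)|^2/(k(z_1,z_1)k(z_2,z_2))$, and then the infimum over admissible $k$.

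For the reverse inequality $d_\Psi(z_1,z_2)\leq \inf(\cdots)$: let $f\in\SA_\Psi$ be the extremal function with $d_\Psi(z_1,z_2)=m(f(z_1),f(z_2))$, which exists by the Montel argument already noted. By Theorem~\ref{CharacterSchurAgler}, part (4), $f$ admits an Agler decomposition $1-\overline{f(z)}f(w)=\langle\rho(1-E(z)^*E(w))h(w),h(z)\rangle_\X$; restricting to $z,w\in\{z_1,z_2\}$ shows that the kernel $k(z,w):=\langle\rho(1-E(z)^*E(w))h(w),h(z)\rangle_\X$ — or rather a suitable rank-one-type kernel built from $h$, such as $k(z,w)=\langle h(w),h(z)\rangle_\X$ after checking $\Psi$-admissibility via the decomposition — is an admissible kernel for which the chain of inequalities above becomes equality at $z_1,z_2$. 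The cleanest route: from the decomposition, $k_0(z,w):=\langle \rho(1-E(z)^*E(w))h(w),h(z)\rangle$ is $\Psi$-admissible (each $(1-\psi(z)\overline{\psi(w)})k_0(z,w)$ is a Schur-complement-positive object because $1-E(z)^*E(w)-\psi\text{-pieces}$ absorb into $\rho$), and with this $k_0$ one has $k_0(z_i,z_i)=1-|f(z_i)|^2$ and $k_0(z_1,z_2)=1-\overline{f(z_1)}f(z_2)$, so $1-|k_0(z_1,z_2)|^2/(k_0(z_1,z_1)k_0(z_2,z_2)) = 1 - |1-\overline{f(z_1)}f(z_2)|^2/((1-|f(z_1)|^2)(1-|f(z_2)|^2)) = m(f(z_1),f(z_2))^2 = d_\Psi(z_1,z_2)^2$, provided $f(z_1)\neq f(z_2)$ so the diagonal entries are nonzero (the case $f(z_1)=f(z_2)$, i.e.\ $d_\Psi(z_1,z_2)=0$, is handled trivially since any admissible kernel with constant diagonal and equal off-diagonal, e.g.\ $k\equiv 1$ if admissible, or again $k_0$, gives $0$). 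This shows the infimum on the right is $\leq d_\Psi(z_1,z_2)^2$, completing the proof.

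The main obstacle I anticipate is verifying that the kernel extracted from the Agler decomposition is genuinely $\Psi$-admissible (and not merely positive semidefinite), i.e.\ that $\big((1-\psi(z)\overline{\psi(w)})k_0(z,w)\big)_{z,w}\geq\mathbf{0}$ for every $\psi\in\Psi$; this requires using that $\rho$ is a $*$-representation of $\CB=\mathcal{C}_b(\Psi)$ and that $E(z)(\psi)=\psi(z)$, so that $\rho(E(z))$ acts ``diagonally'' in a way compatible with each coordinate $\psi$, together with the observation that $1-E(z)^*E(w)$ dominates (in the appropriate completely positive sense, after tensoring against a positive functional concentrated near $\psi$) the scalar $1-\overline{\psi(z)}\psi(w)$ — this is exactly the content relating admissible kernels to the $C^*$-algebra $\CB$ and is implicit in the equivalence of (1) and (4) in Theorem~\ref{CharacterSchurAgler}. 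A secondary technical point is the careful bookkeeping of the degenerate cases ($|f(z_i)|=1$ or $f(z_1)=f(z_2)$ or vanishing kernel diagonals), which should be dispatched quickly but must be stated.
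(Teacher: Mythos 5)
Your first half uses the right tool (the $2\times 2$ positivity and determinant test, exactly as in the paper) but ends with the inequality pointing the wrong way. The determinant condition for $\big((1-f(z_i)\overline{f(z_j)})k(z_i,z_j)\big)_{i,j=1,2}\geq\mathbf{0}$ gives $(1-|f(z_1)|^2)(1-|f(z_2)|^2)\,k(z_1,z_1)k(z_2,z_2)\geq |1-f(z_1)\overline{f(z_2)}|^2\,|k(z_1,z_2)|^2$, and dividing yields $1-m(f(z_1),f(z_2))^2\geq |k(z_1,z_2)|^2/\big(k(z_1,z_1)k(z_2,z_2)\big)$, i.e.\ $m(f(z_1),f(z_2))\leq\sqrt{1-|k(z_1,z_2)|^2/\big(k(z_1,z_1)k(z_2,z_2)\big)}$ --- not ``$\geq$'' as you wrote. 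With the direction corrected, this paragraph proves $d_\Psi(z_1,z_2)\leq\inf(\cdots)$; you have the two halves of the theorem labelled backwards throughout, which by itself would be a cosmetic issue.

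The second half, however, contains a substantive error: your witnessing kernel is the wrong one. Since $k_0(z,w)=\langle\rho(1-E(z)^*E(w))h(w),h(z)\rangle$ is, by the Agler decomposition, literally equal to $1-\overline{f(z)}f(w)$, one computes $|k_0(z_1,z_2)|^2/\big(k_0(z_1,z_1)k_0(z_2,z_2)\big)=|1-\overline{f(z_1)}f(z_2)|^2/\big((1-|f(z_1)|^2)(1-|f(z_2)|^2)\big)=1/(1-m^2)\geq 1$ where $m=m(f(z_1),f(z_2))$, so the quantity $1-|k_0(z_1,z_2)|^2/\big(k_0(z_1,z_1)k_0(z_2,z_2)\big)$ equals $-m^2/(1-m^2)\leq 0$, not $m^2$; the same computation shows $k_0$ fails the $2\times 2$ determinant test for $\Psi$-admissibility whenever $m>0$, so it is not even an admissible kernel. (Your alternative $\langle h(w),h(z)\rangle$ has no reason to produce the value $m^2$ either.) What is needed is the \emph{reciprocal}: the paper takes $\tilde k(x,y)=1/(1-f(x)\overline{f(y)})$ for $x,y\in\{z_1,z_2\}$ and $0$ elsewhere, for which the ratio is $1-m^2$ and the displayed quantity is exactly $m^2=d_\Psi(z_1,z_2)^2$. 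Its admissibility requires no Agler decomposition and none of the representation-theoretic machinery you worry about in your closing paragraph: for each $\psi\in\Psi$ the $2\times 2$ determinant condition for $\big((1-\psi(z_i)\overline{\psi(z_j)})\tilde k(z_i,z_j)\big)\geq\mathbf{0}$ reduces to $m(\psi(z_1),\psi(z_2))\leq m(f(z_1),f(z_2))$, which holds because $\psi\in\SA_\Psi$ and $f$ is extremal for $d_\Psi$. So the appeal to Theorem \ref{CharacterSchurAgler}(4) should be replaced by this elementary construction.
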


\begin{proof}
	We have that there is an $f\in \SA_\Psi$ such that $$d_\Psi (z_1,z_2)=m(f(z_1), f(z_2)).$$
	So, for any $k\in \mathcal{K}_\Psi$ we have
	\begin{align*}\begin{pmatrix}
			(1-|f(z_1)|^2)k(z_1,z_1)& (1- f(z_1) \overline{f(z_2)})k(z_1,z_2)\\
			(1- f(z_2) \overline{f(z_1)})k(z_2,z_1)& 	(1-|f(z_2)|^2)k(z_2,z_2)
		\end{pmatrix} \geq \mathbf{0}.
	\end{align*} 
	
	Using the fact that the above matrix has non-negative determinant, we get 
	\begin{align*}
		\sqrt{1-\frac{|k(z_1, z_2)|^2}{k(z_1,z_1)k(z_2,z_2)}}\geq m(f(z_1), f(z_2))=d_\Psi (z_1,z_2).
	\end{align*}
	Now, we consider the following function on $\Omega\times\Omega$
	\begin{align}\label{kernel_associated_to_function}
		\tilde{k}(x,y)&=\frac{1}{1- f(x)\overline{f(y)}},\,x,y\in\{z_1,z_2\}\\
		&=0,\,\,\text{otherwise}.\notag
	\end{align}
	
	\textbf{Claim.} $\tilde{k}$ is an admissible kernel.
	
	\textit{Proof of the claim.} We will show that $\big((1-\psi(z)\overline{\psi(w)})\tilde{k}(z,w)\big)_{z,w\in \Omega}$ is positive semidefinite for all $\psi\in\Psi$. Let $F$ be any finite subset of $\Omega$. If none of $z_1$ and $z_2$ belong to $F$ then $\big((1-\psi(z)\overline{\psi(w)})\tilde{k}(z,w)\big)_{z,w\in F}$ is a matrix with all entries equal to zero and therefore it is trivially positive semidefinite. If $z_1\in F$ but $z_2\notin F$ then the matrix has only one nonzero entry $\frac{1-|\psi(z_1)|^2}{1-|f(z_1)|^2}$ which occurs on the diagonal and hence the matrix is positive semidefinite. Now suppose that $z_1,z_2\in F$. We may write $F=\{z_1,z_2,z_3\ldots,z_l\}$. So the only nontrivial entries are in the leading principal submatrix of order 2 which is given by $\begin{pmatrix}
		\frac{1-\psi(z_i)\overline{\psi(z_j)}}{1-f(z_i)\overline{fi(z_j)}}
	\end{pmatrix}_{1\leq i,j\leq 2}.$ Since $\Psi\subset \SA_\Psi$ and $m(f(z_1),f(z_2))\geq m(h(z_1),h(z_2))$ for all $h \in \SA_\Psi$, the matrix is positive semidefinite and the claim is proved.

	Now it is easy to see that
	\begin{align*}
		\sqrt{1-\frac{|\tilde{k}(z_1, z_2)|^2}{\tilde{k}(z_1,z_1)\tilde{k}(z_2,z_2)}}=m(f(z_1), f(z_2))=d_\Psi (z_1,z_2)
	\end{align*}
	and the proof of the theorem is complete.
\end{proof}

\begin{corollary}
	\begin{align}\label{d_in_Psi}
		d_\Psi (z_1,z_2)=\sup \Big\{ m(\psi(z_1), \psi(z_2)):\psi\in \Psi\Big\}.
	\end{align}
	Moreover, if $\Psi$ is closed under pointwise convergence, then $d_\Psi (z_1,z_2)$ is attained by some element of $\Psi$.
\end{corollary}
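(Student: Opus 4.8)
The plan is to prove the two inequalities separately, writing $\delta:=\sup\{m(\psi(z_1),\psi(z_2)):\psi\in\Psi\}$. The inequality $d_\Psi(z_1,z_2)\ge\delta$ is immediate, because $\Psi\subseteq\SA_\Psi$: by the very definition of $\mathcal{K}_\Psi$, every $\psi\in\Psi$ satisfies $\big((1-\psi(z)\overline{\psi(w)})k(z,w)\big)_{z,w}\ge\mathbf{0}$ for each $k\in\mathcal{K}_\Psi$, so each $\psi$ is among the competitors in the supremum \eqref{invariant_distance}.

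For the reverse inequality $d_\Psi(z_1,z_2)\le\delta$ I would first manufacture an extremal ``test function''. Pick $\psi_n\in\Psi$ with $m(\psi_n(z_1),\psi_n(z_2))\to\delta$; since $\Psi\subseteq\mathscr{S}(\Omega)$ is a normal family, Montel's theorem yields a subsequence converging locally uniformly to a holomorphic $\psi_*:\Omega\to\overline{\D}$. Because $\SA_\Psi$ is closed under pointwise convergence and contains every $\psi_n$, we get $\psi_*\in\SA_\Psi$; moreover, by \eqref{NormOfEz} each $\psi_n(z_i)$ lies in the closed disc of radius $\|E(z_i)\|<1$, hence so does $\psi_*(z_i)$, so $m$ is continuous at $(\psi_*(z_1),\psi_*(z_2))$ and $m(\psi_*(z_1),\psi_*(z_2))=\delta$ (with $\delta<1$ automatically).

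Now I mimic the construction \eqref{kernel_associated_to_function} with $\psi_*$ in place of $f$: set $\tilde k(x,y)=(1-\psi_*(x)\overline{\psi_*(y)})^{-1}$ for $x,y\in\{z_1,z_2\}$ and $\tilde k\equiv 0$ otherwise. This $\tilde k$ is positive semidefinite, and to see $\tilde k\in\mathcal{K}_\Psi$ one checks, for each $\psi\in\Psi$, that the Hermitian $2\times 2$ matrix $\big((1-\psi(z_i)\overline{\psi(z_j)})\tilde k(z_i,z_j)\big)_{i,j}$ is positive semidefinite: its diagonal entries are visibly nonnegative, and a direct computation shows its determinant is a nonnegative multiple of $m(\psi_*(z_1),\psi_*(z_2))^2-m(\psi(z_1),\psi(z_2))^2=\delta^2-m(\psi(z_1),\psi(z_2))^2\ge 0$, the last step being exactly the definition of $\delta$. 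Since $\tilde k(z_i,z_i)=(1-|\psi_*(z_i)|^2)^{-1}\ne 0$, the identity \eqref{d_in_terms_of_adm_kernels} applies to $\tilde k$ and gives $d_\Psi(z_1,z_2)\le\sqrt{1-|\tilde k(z_1,z_2)|^2/(\tilde k(z_1,z_1)\tilde k(z_2,z_2))}=m(\psi_*(z_1),\psi_*(z_2))=\delta$, which proves the displayed formula. For the ``moreover'' clause: if $\Psi$ is closed under pointwise convergence then the pointwise limit $\psi_*$ of the $\psi_n$ already lies in $\Psi$, and it attains $m(\psi_*(z_1),\psi_*(z_2))=\delta=d_\Psi(z_1,z_2)$, so the supremum is realized in $\Psi$.

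I do not expect a genuine obstacle: the only points needing care are the extraction of the extremal $\psi_*$ as a locally uniform limit of test functions and the verification that it lands back in $\SA_\Psi$ with $|\psi_*(z_i)|<1$ --- after that, the work is precisely the determinant estimate already carried out in the proof of \eqref{d_in_terms_of_adm_kernels}, now read in the opposite direction. (An alternative proof of $d_\Psi(z_1,z_2)\le\delta$ avoids normal families altogether: starting from the realization $f(z)=a+B\rho(E(z))(I_{\X}-D\rho(E(z)))^{-1}C$ of an arbitrary $f\in\SA_\Psi$ together with \eqref{AglerDecompInner}, one extends the scalar inequality $(1-\delta^2)|1-\overline{\psi(z_1)}\psi(z_2)|^2\le(1-|\psi(z_1)|^2)(1-|\psi(z_2)|^2)$ from $\Psi$ to the whole maximal ideal space of $\CB$ by density and continuity, applies the unital $*$-representation $\rho$, and extracts $m(f(z_1),f(z_2))\le\delta$ by a Cauchy--Schwarz estimate; I would nevertheless keep the normal-families argument as the main one, since it is shorter and yields the ``moreover'' clause for free.)
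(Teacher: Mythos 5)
Your argument is correct and follows essentially the same route as the paper's: the lower bound comes from $\Psi\subseteq\SA_\Psi$, and the upper bound comes from extracting an extremal limit $\psi_*$ of test functions via Montel, forming the two-point kernel \eqref{kernel_associated_to_function} from it, and invoking \eqref{d_in_terms_of_adm_kernels}. You merely make explicit two steps the paper leaves terse --- the determinant verification that this kernel is $\Psi$-admissible and the normal-families extraction of $\psi_*$ --- and the ``moreover'' clause is handled identically in both.
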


\begin{proof} 
	We find a sequence $\{\psi_n\}$ in $\Psi$ such that $\{m(\psi_n(z_1),\psi_n(z_2))\}$ converges to $\sup \Big\{ m(\psi(z_1), \psi(z_2)):\psi\in \Psi\Big\}$. Since each element of $\Psi$ vanishes at a point of $\Omega$ and $\Psi$ separates the poins of $\Omega$, by Montel's theorem, there is a non-constant function $\tilde{\psi}:\Omega \rightarrow\mathbb{D}$ such that $\tilde{\psi}\in \SA_\Psi$ and $$\sup \Big\{ m(\psi(z_1), \psi(z_2)):\psi\in \Psi\Big\}=m(\tilde{\psi}(z_1), \tilde{\psi}(z_1)).$$
	Note that $m(\tilde{\psi}(z_1), \tilde{\psi}(z_1))\geq m(\psi(z_1), \psi(z_1))$ for all $\psi\in \Psi$.
	We now construct a function like $\tilde{k}$ as in (\ref{kernel_associated_to_function}) using $\tilde{\psi}$, that is,
		\begin{align*}
		\tilde{k}(x,y)&=\frac{1}{1- \tilde{\psi}(x)\overline{\tilde{\psi}(y)}},\,x,y\in\{z_1,z_2\}\\
		&=0,\,\,\text{otherwise}.\notag
	\end{align*} 
	
	Then arguing as in the proof of Theorem \ref{d_in_terms_of_adm_kernels}, we see that $\tilde{k}$ is an admissible kernel. Now we have
	\begin{align*}
		m(\tilde{\psi}(z_1), \tilde{\psi}(z_2)&=\sqrt{1-\frac{|\tilde{k}(z_1, z_2)|^2}{\tilde{k}(z_1,z_1)\tilde{k}(z_2,z_2)}}\\
		&\geq d_\Psi (z_1,z_2)\\
		&\geq m(\tilde{\psi}(z_1), \tilde{\psi}(z_2)),
	\end{align*}  
	and hence, (\ref{d_in_Psi}) follows.

	 When the collection $\Psi$ is closed in the topology of pointwise convergence, we have $\tilde{\psi}\in \Psi\subset \SA_\Psi$. This completes the proof.
\end{proof}

The next result shows that containing the Carath\'eodory extremals is necessary, not sufficient though (see page 47 in \cite{Agler1990}), for a collection of test functions to generate the Schur class.
\begin{corollary}\label{CaraAndTest}
	A necessary condition for a collection $\Psi$ of test functions to satisfy $\SA_\Psi=\mathscr{S}(\Omega)$ is that for any two points $z_1,z_2\in \Omega$, the Carath\'eodory pseudodistance $c_\Omega ^* (z_1,z_2)$ must be equal to $$\sup \Big\{ m(\psi(z_1), \psi(z_2)):\psi\in \Psi\Big\}.$$
\end{corollary}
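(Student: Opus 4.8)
The plan is to reduce this to the corollary immediately preceding it together with an elementary comparison between $d_\Psi$ and the Carath\'eodory pseudodistance. By that preceding corollary we already know that for all $z_1,z_2\in\Omega$,
\begin{align*}
	d_\Psi(z_1,z_2)=\sup\{m(\psi(z_1),\psi(z_2)):\psi\in\Psi\},
\end{align*}
so it suffices to show that the hypothesis $\SA_\Psi=\mathscr{S}(\Omega)$ forces $d_\Psi(z_1,z_2)=c_\Omega^*(z_1,z_2)$, and then chain the two equalities.

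First I would observe that the supremum defining $c_\Omega^*(z_1,z_2)$ in (\ref{CaraDefn}) may be taken over all of $\mathscr{S}(\Omega)$, not merely over those $f$ with $f(\Omega)\subset\D$. Indeed, given $f\in\mathscr{S}(\Omega)$ taking values in $\overline{\D}$, the functions $rf$ with $0<r<1$ again lie in $\mathscr{S}(\Omega)$, map $\Omega$ into $\D$, and satisfy $m(rf(z_1),rf(z_2))\to m(f(z_1),f(z_2))$ as $r\uparrow 1$, since $m$ is continuous at pairs of points of $\D$; hence passing to the $r\uparrow 1$ limit shows that allowing $\overline{\D}$-valued competitors does not increase the supremum. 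Therefore $c_\Omega^*(z_1,z_2)=\sup\{m(f(z_1),f(z_2)):f\in\mathscr{S}(\Omega)\}$.

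Now, under the assumption $\SA_\Psi=\mathscr{S}(\Omega)$, the supremum defining $d_\Psi(z_1,z_2)$ in (\ref{invariant_distance}) is literally the same supremum as the one just obtained for $c_\Omega^*(z_1,z_2)$, so the two quantities coincide. Combining this with the displayed identity from the preceding corollary yields $c_\Omega^*(z_1,z_2)=\sup\{m(\psi(z_1),\psi(z_2)):\psi\in\Psi\}$, which is the asserted necessary condition.

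The argument is short and I do not expect a genuine obstacle; the only point that needs a little care is the passage from $\D$-valued to $\overline{\D}$-valued functions in (\ref{CaraDefn}), handled above by the scaling $f\mapsto rf$. One could also remark, for completeness, that both suprema are in fact attained (by Montel's theorem, exactly as was already used to produce the extremal $f\in\SA_\Psi$ for $d_\Psi$), but attainment is not needed for the equality of the suprema itself.
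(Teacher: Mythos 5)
Your proposal is correct and follows essentially the same route as the paper: the paper's one-line proof likewise observes that $\SA_\Psi=\mathscr{S}(\Omega)$ forces $d_\Psi=c^*_\Omega$ and then invokes the preceding corollary identifying $d_\Psi(z_1,z_2)$ with $\sup\{m(\psi(z_1),\psi(z_2)):\psi\in\Psi\}$. Your extra remark on passing from $\D$-valued to $\overline{\D}$-valued competitors via $f\mapsto rf$ is a harmless bit of care that the paper leaves implicit.
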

\begin{proof}
	If $\SA_\Psi=\mathscr{S}(\Omega)$, then we have $d_\Psi =c^* _\Omega$, and hence, the result follows.
\end{proof}

Next we prove existence of a collection of test functions so that the structure in Theorem \ref{CharacterSchurAgler} holds for the Schur class $\mathscr{\Omega}$.

\begin{proposition}\label{UniversalTestFunctions}
	For any Carath\'eodory hyperbolic domain $\Omega\subset\mathbb{C}^m$, there is a collection $\Psi$ of test functions such that $\SA_\Psi=\mathscr{S}(\Omega)$.
\end{proposition}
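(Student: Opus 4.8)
The plan is to exhibit an explicit universal collection. Fix any point $w_0\in\Omega$ and set
$$\Psi:=\{f\in\mathscr{S}(\Omega):f(w_0)=0\}.$$
This $\Psi$ consists of elements of $\mathscr{S}(\Omega)$ all vanishing at $w_0$, so it fits the standing normalization and moreover $\SA_\Psi\subseteq\mathscr{S}(\Omega)$ comes for free; the content is to check that $\Psi$ really is a set of test functions and that the reverse inclusion $\mathscr{S}(\Omega)\subseteq\SA_\Psi$ holds.

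First I would verify the two defining conditions for test functions. For condition (1), if $f\in\Psi$ then $|f(z)|=m(f(z),f(w_0))$, so $\sup_{\psi\in\Psi}|\psi(z)|=\sup\{m(f(z),f(w_0)):f\in\mathscr{S}(\Omega)\}=c^*_\Omega(z,w_0)$, and this is $<1$ because the Carath\'eodory pseudodistance is attained by an extremal mapping $\Omega$ into $\D$, for which $m(\cdot,\cdot)<1$. For condition (2), fix a finite set $F\subset\Omega$. For any $h\in H^\infty(\Omega)$ and any $\varepsilon>0$ with $\varepsilon(\|h\|_\infty+|h(w_0)|)\le 1$, the function $\varepsilon(h-h(w_0))$ lies in $\Psi$, hence $h|_F=\varepsilon^{-1}\big(\varepsilon(h-h(w_0))\big)|_F+h(w_0)\cdot\mathbf{1}|_F$ lies in the algebra generated by $\Psi|_F$ and the constants; so that algebra contains $H^\infty(\Omega)|_F$. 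Since $\Omega$ is Carath\'eodory hyperbolic, $H^\infty(\Omega)$ separates the points of $F$, and a generic $\mathbb{C}$-linear combination $h$ of finitely many separating functions is injective on $F$, whereupon $\mathbf{1},h,\dots,h^{|F|-1}$ span $\mathbb{C}^F$ by the Vandermonde determinant with distinct nodes $\{h(p):p\in F\}$. Thus $H^\infty(\Omega)|_F=\mathbb{C}^F$ and $\Psi$ is a set of test functions.

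To get $\mathscr{S}(\Omega)\subseteq\SA_\Psi$ I would use the M\"obius invariance of the Schur--Agler class: for $\alpha\in\D$ and $f\in\SA_\Psi$, writing $\varphi_\alpha(\zeta)=(\zeta-\alpha)/(1-\bar\alpha\zeta)$ and $u(z)=(1-\bar\alpha f(z))^{-1}$, one has the identity $1-\varphi_\alpha(f(z))\overline{\varphi_\alpha(f(w))}=(1-|\alpha|^2)\,u(z)\overline{u(w)}\,(1-f(z)\overline{f(w)})$. Hence for every $k\in\mathcal{K}_\Psi$ the matrix $\big((1-\varphi_\alpha(f(z))\overline{\varphi_\alpha(f(w))})k(z,w)\big)$ is the Schur product of the rank-one positive kernel $(1-|\alpha|^2)u(z)\overline{u(w)}$ with the positive matrix $\big((1-f(z)\overline{f(w)})k(z,w)\big)$, so it is positive and $\varphi_\alpha\circ f\in\SA_\Psi$. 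Now take any $g\in\mathscr{S}(\Omega)$: if $|g(w_0)|=1$ then $g$ is a unimodular constant and lies in $\SA_\Psi$ trivially; otherwise $\alpha:=g(w_0)\in\D$ and $\varphi_\alpha\circ g\in\mathscr{S}(\Omega)$ vanishes at $w_0$, so $\varphi_\alpha\circ g\in\Psi\subseteq\SA_\Psi$, and applying the invariance with $\varphi_\alpha^{-1}=\varphi_{-\alpha}$ gives $g=\varphi_{-\alpha}\circ(\varphi_\alpha\circ g)\in\SA_\Psi$. Combined with $\SA_\Psi\subseteq\mathscr{S}(\Omega)$ this yields $\SA_\Psi=\mathscr{S}(\Omega)$.

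The two algebraic facts — that a point-separating unital subalgebra of $\mathbb{C}^F$ is everything, and the Schur-product computation for $\varphi_\alpha$ — are routine. The one step deserving care is condition (2) of the test-function definition, where Carath\'eodory hyperbolicity is used precisely to force point separation on finite subsets; without that hypothesis the construction breaks down, which is exactly the phenomenon highlighted in the surrounding discussion.
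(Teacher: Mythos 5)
Your proposal is correct and follows essentially the same route as the paper: your $\Psi=\{f\in\mathscr{S}(\Omega):f(w_0)=0\}$ coincides (up to sign) with the paper's family $\psi_f=(f(w_0)-f)(1-\overline{f(w_0)}f)^{-1}$, and both arguments verify the test-function axioms via Carath\'eodory hyperbolicity and then recover all of $\mathscr{S}(\Omega)$ from the M\"obius invariance of $\SA_\Psi$. The only difference is that you spell out the finite-set algebra-generation step and the Schur-product identity for $\varphi_\alpha$, which the paper leaves implicit.
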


\begin{proof}
	Let $w_0\in \Omega$ and let $\Psi =\{\psi_f (z)=\frac{f(w_0)-f(z)}{1-\overline{f(w_0)}f(z)}:f \in \mathscr{S} (\Omega)\}$. Then clearly, this $\Psi$ satisfies the second defining condition for a set of test functions, as it separates the points of $\Omega$. Also, for any point $z\in \Omega$
	\begin{align*}
		|\psi_f(z)|=\bigg|\frac{f (w_0)-f (z)}{1- \overline{f(w_0)}f(z)}\bigg|\leq c^* _\Omega  (w_0 , z)<1.
	\end{align*}
	Since $\Psi\subset\SA_\Psi$ and, for any  $a\in \mathbb{D}$ and $\psi \in \Psi$, $(a-\psi)(1-\overline{a}\psi)^{-1}\in \SA_\Psi$ we have $\mathscr{S}(\Omega)\subset\SA_\Psi$. This completes the proof.
\end{proof}
\begin{corollary}
	With the collection $\Psi$ of test functions described in the proof of Proposition \ref{UniversalTestFunctions}, the element $E(z)\in \CB$ has norm $c^*(w_0,z)$ for all $z\in \Omega$.
\end{corollary}
\begin{proof}
	Since, $c^*(w_0,z)$ is always attained by a function in $\mathscr{S}(\Omega)$, the result is clear from our choice of $\Psi$ and the fact that $||E(z)||=\text{sup}_{\psi\in \Psi} |\psi(z)|$.
\end{proof}
We now focus on our main result of this section. It says that under certain conditions, the disc and the bidisc are the only two domains whose Schur class can be generated by finitely many test functions. First we need a result which describes the minimal universal set for the Carath\'eodory extremal problem on $\D^n$.

\begin{lemma}\label{Uniqueness_Cara_Ext_Poly}
	Suppose that $F_1,\ldots,F_n$ are $n$ functions in $\mathscr{S}(\D^n)$ such that 
	\begin{align*}
		c^* _{\D^n}(u_1,u_2)=max\{m(F_j (u_1),F_j(u_2)):1\leq j\leq n\},\,\,\,\text{for all}\,\,\,u_1,u_2\in \D^n.
	\end{align*}
	Then the set $\{F_1,\ldots,F_n\}$ and the set of the coordinate functions $\{z_1,\ldots,z_n\}$ are equal upto composition with automorphisms of $\D$.
\end{lemma}
\begin{proof}
	We first note that $\{F_1,\ldots,F_n\}$ is a universal set for the Carath\'eodory extremal problem on $\D^n$. We define $\Phi:\D^n\rightarrow \D^n$ by 
	\begin{align*}
		\Phi(z)=(F_1(z),\ldots, F_n(z)),\,\,\,\ z\in \D^n.
	\end{align*}
	By Theorem 4.4 in \cite{KosinZwoExt}, $\Phi$ is a biholomorphism from $\D^n$ onto $\D^n$, that is, $\Phi$ is an automorphism of $\D^n$. So there are automorphisms $\phi_1,\ldots, \phi_n$ of $\D$ and a permutation $\sigma$ of $\{1,\ldots,n\}$ such that $\Phi(z)=\Phi(z_1,\ldots, z_n)=(\phi_1(z_{\sigma(1)}),\ldots,\phi_n (z_{\sigma(n)}))$ for all $z\in \D^n$ (see page 790 in \cite{J-P-Invariant}). This is sufficient to conclude the proof.
\end{proof}

Next we observe the following: Suppose that $\Psi=\{\psi_j:1\leq j\leq n\}$ is finite and for each $j$, we consider the function $\tau_j\in \CB$ satisfying $\tau_j(\psi_k)=\delta_{jk}$ (the Kronecker delta). Then, the collection $\{\tau_j:1\leq j\leq n\}$ generates $\CB$ and for any fixed $z\in \Omega$ we can write $E(z)=\sum_{j=1}^{n} \psi_j(z)\tau_j$. In this case, we conclude from (\ref{AglerDecompInner}) that for a given $f\in\SA_\Psi$ there is a Hilbert space $\X$, a unital $*$-representation $\rho:\CB\rightarrow B(\X)$ and a map $h:\Omega\rightarrow \X$ such that
\begin{align}\label{AglerDecomp}
	1-\overline{f(z)}f(w)=\sum_{j=1}^{n} (1-\overline{\psi_j(z)}\psi_j(w)) \langle\rho(\tau_j)h(w),h(z)\rangle_\X \,\,\,\text{for all $z,w\in \Omega$.}
\end{align}
To state the main result, we need the following two notions:
\begin{enumerate}
	\item The first one is \textit{$\gamma$-hyperbolicity}. We call a domain  $\gamma$-hyperbolic if its Carath\'eodory–Reiffen pseudometric is nondegenrate (\cite{J-P-Invariant}). 
	\item Secondly, we call a domain $\Omega$ \textit{$c^*$-finitely compact} if for each $z_1\in\Omega$ and $r\in (0,1)$ the set $\{z_2\in \Omega:c_\Omega ^*(z_1,z_2)<r\}$ is a relatively compact subset of $\Omega$ (\cite{KosinZwoExt}).
\end{enumerate}

\begin{theorem}
	Suppose that $\Omega$ is a domain which is Carath\'eodory hyperbolic, $\gamma$-hyperbolic and $c^*$-finitely compact. Then, the following are equivalent.
	\begin{enumerate}
		\item $\Omega\subset \mathbb{C}^m$ and there is a collection $\Psi=\{\psi_j:1\leq j\leq m\}$ consisting of $m$ functions such that $\SA_\Psi=\mathscr{S}(\Omega)$.
		\item $m=1$ or $2$ and, $\Omega$ is biholomorphic with $\D^m$.
	\end{enumerate}
\end{theorem}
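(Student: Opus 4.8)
The plan is to prove the two implications separately. The implication $(2)\Rightarrow(1)$ is essentially a transfer of classical realization theory along a biholomorphism: given a biholomorphism $\Phi\colon\Omega\to\D^m$ with $m\in\{1,2\}$, I would set $\psi_j:=z_j\circ\Phi$, post-composing with coordinatewise M\"{o}bius automorphisms of $\D$ so that the $\psi_j$ acquire a common zero $w_0$ (permissible by the standing reduction recorded before this section). Since $\Phi$ is injective the $\psi_j$ separate points, hence on every finite subset of $\Omega$ they generate, together with the constants, all $\mathbb{C}$-valued functions, so $\Psi=\{\psi_1,\dots,\psi_m\}$ is a set of $m$ test functions. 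Because $\Psi$-admissible kernels and Schur functions transform naturally under $\Phi$, the identity $\SA_\Psi=\mathscr{S}(\Omega)$ is equivalent to the assertion that the Schur-Agler class of $\D^m$ associated with the coordinate functions equals $\mathscr{S}(\D^m)$: for $m=1$ this is Schur's lemma (test the admissibility inequality against the Szeg\H{o} kernel, and use the Schur product theorem for the converse), and for $m=2$ it is Agler's realization theorem for the bidisc \cite{Agler1990}.

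For $(1)\Rightarrow(2)$, assume $\Omega\subseteq\mathbb{C}^m$ and $\Psi=\{\psi_1,\dots,\psi_m\}$ with $\SA_\Psi=\mathscr{S}(\Omega)$, and form $\Phi=(\psi_1,\dots,\psi_m)\colon\Omega\to\D^m$, which lands in the polydisc by the first test-function axiom. Applying the second test-function axiom to two-point subsets of $\Omega$ forces $\Phi$ to be injective, and since an injective holomorphic map from a domain in $\mathbb{C}^m$ into $\mathbb{C}^m$ is a biholomorphism onto its (necessarily open) image, I set $\Omega'=\Phi(\Omega)\subseteq\D^m$, an open connected subset. Transporting everything along $\Phi$: the $\psi_j$ become the coordinate functions of $\Omega'$, the $\Psi$-admissible kernels become coordinate-admissible kernels, and the hypothesis becomes ``the coordinate Schur-Agler class of $\Omega'$ equals $\mathscr{S}(\Omega')$''. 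Carrying Corollary~\ref{CaraAndTest} across $\Phi$ gives $c^*_{\Omega'}(p,q)=\max_{1\le j\le m}m(p_j,q_j)$ for all $p,q\in\Omega'$; combining this with the inequality $c^*_{\D^m}\le c^*_{\Omega'}$ on $\Omega'$ (valid since $\Omega'\subseteq\D^m$) and the lower bound $c^*_{\D^m}(p,q)\ge\max_j m(p_j,q_j)$ coming from the coordinate projections, I conclude $c^*_{\Omega'}=c^*_{\D^m}|_{\Omega'\times\Omega'}$.

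The next step promotes $\Omega'$ to all of $\D^m$. Fix $p\in\Omega'$; if $q_n\in\Omega'$ with $q_n\to q\in\D^m$, then $c^*_{\D^m}(p,q_n)\to c^*_{\D^m}(p,q)<1$ by continuity and finiteness of $c^*_{\D^m}$ on the polydisc, so eventually the $q_n$ all lie in $\{q'\in\Omega':c^*_{\Omega'}(p,q')<r\}$ for some fixed $r<1$; by $c^*$-finite compactness (transported along $\Phi$) this ball is compact in $\Omega'$, hence closed in $\D^m$, so $q\in\Omega'$. Thus $\Omega'$ is both open and closed in the connected set $\D^m$, whence $\Omega'=\D^m$ and $\Omega$ is biholomorphic with $\D^m$. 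To get $m\le 2$: if $m\ge 3$, the classical failure of von Neumann's inequality for three commuting contractions yields, after rescaling, a function in the open unit ball of $H^\infty(\D^m)\subseteq\mathscr{S}(\D^m)$ that cannot lie in the coordinate Schur-Agler class, since every member of the latter admits the functional-model representation of Theorem~\ref{CharacterSchurAgler} --- here $\mathcal{C}_b(\{z_1,\dots,z_m\})\cong\mathbb{C}^m$, so the $*$-representation amounts to a family of mutually orthogonal projections summing to the identity --- and hence satisfies $\|f(T_1,\dots,T_m)\|\le 1$ for all commuting contractions $T_j$; this contradicts $\SA_\Psi=\mathscr{S}(\Omega)\cong\mathscr{S}(\D^m)$.

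The crux is the step $\Omega'=\D^m$. Equality of Carath\'eodory distances alone does not suffice --- $\D^m$ with an interior point deleted, a removable situation for bounded holomorphic functions, would still satisfy it --- and it is precisely $c^*$-finite compactness that excludes such configurations by dragging limit points of sequences back into the domain, while $\gamma$-hyperbolicity is what keeps $\Omega$ inside the Carath\'eodory-hyperbolic regime in which the extremal and Montel arguments of this section, and thus Corollary~\ref{CaraAndTest}, are valid. The remaining ingredients --- the transport of admissible kernels and of $c^*$ along $\Phi$, the biholomorphism theorem for injective holomorphic maps, and the citations to Schur, Agler, and Varopoulos --- are routine.
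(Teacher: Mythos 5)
Your proof is correct, and for the substantive direction $(1)\Rightarrow(2)$ it takes a genuinely different route from the paper at the key step. The paper gets ``$\Phi=(\psi_1,\dots,\psi_m)$ is a biholomorphism of $\Omega$ onto $\D^m$'' in one stroke by observing that $\Psi$ is a universal set for the Carath\'eodory extremal problem and then citing Theorem 4.4 of \cite{KosinZwoExt} (this is exactly where the hypotheses of $\gamma$-hyperbolicity and $c^*$-finite compactness enter); after that it identifies $\psi_j\circ\Phi^{-1}$ with the coordinate functions via uniqueness of Carath\'eodory extremals on $\D^m$ and concludes as you do from the failure of the two-term Agler decomposition for $m\ge 3$. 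You instead reprove the needed special case of the Kosi\'nski--Zwonek theorem by hand: injectivity of $\Phi$ from the point-separation axiom for test functions, openness of the image from the equidimensional injectivity theorem, the identity $c^*_{\Omega'}=c^*_{\D^m}|_{\Omega'\times\Omega'}$ from Corollary~\ref{CaraAndTest} plus the two monotonicity inequalities, and then the clopen argument in which $c^*$-finite compactness drags limit points back into $\Omega'$. This is sound, self-contained, and arguably more informative about where each hypothesis is used; notably, your argument never actually invokes $\gamma$-hyperbolicity (your closing remark about it is the one soft spot --- point separation does the work that nondegeneracy of the Carath\'eodory--Reiffen metric does in \cite{KosinZwoExt}, so the hypothesis is simply not needed on your route). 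The cost is length and the need to supply the von Neumann--inequality argument (Varopoulos/Crabb--Davie) explicitly where the paper just cites \cite{Agler1990} and \cite{D-M}; the benefit is independence from the black-boxed extension-property machinery. The $(2)\Rightarrow(1)$ direction is the same transfer-along-a-biholomorphism argument in both treatments, with your M\"obius normalization to a common zero being a small but welcome addition.
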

\begin{proof}
	$(2)\implies (1):$ We prove it for the case $m=2$. Let $F:\Omega\rightarrow \D^2$ be a biholomorphism and let $z_1$ and $z_2$ denote the coordinate functions on $\D^2$. Then clearly, $\psi_j=z_j\circ F$ are test functions on $\Omega$. For an $f\in \mathscr{S}(\Omega)$, we find that $f\circ F^{-1}\in \mathscr{S}(\D^2)$, and hence, has an Agler decomposition of the form (\ref{AglerDecomp}) in terms of $z_1$ and $z_2$. From this we conclude that $f$ has an Agler decomposition in terms of $\psi_j,j=1,2$.
	
	$(1)\implies(2):$ When $\SA_\Psi=\mathscr{S}(\Omega)$, by Corollary \ref{CaraAndTest}, we see that $\Psi$ is a universal set for the Carath\'eodory extremal problem. By Theorem 4.4 in \cite{KosinZwoExt}, the map $\Phi:\Omega\rightarrow \D^m$ sending $w$ to $(\psi_1(w)\ldots,\psi_m (w))$ is a biholomorphism. Invariance of the Carath\'eodory pseudodistance says that $\{\psi_j\circ \Phi^{-1}:1\leq j\leq m\}$ is a universal set for the Carath\'eodory extremal problem on $\D^m$. By Lemma \ref{Uniqueness_Cara_Ext_Poly}, we see that the set $\{\psi_j\circ \Phi^{-1}:1\leq j\leq m\}$ is just the set $\{z_j:1\leq j\leq m\}$ of $m$ coordinate functions. If $f\in \mathscr{S}(\D^m)$ is any function, then $f\circ \Phi\in \SA_\Psi$, and hence, has an Agler decomposition of the form (\ref{AglerDecomp}) in terms of the functions in $\Psi$. Consequently, $f$ has an Agler decomposition in terms of the coordinate functions $z_j$. Since $f$ is arbitrary, this is a contradiction unless $m=1$ or $2$ (see page 47 in \cite{Agler1990}).
\end{proof}
\section{Applications}\label{Applications}
\subsection{Carath\'eodory extremals and the multiplier algebra of the Drury-Arveson space}
Following the description in \cite{A-M}, we consider the kernel $K_m:\mathbb{B}_m\times \mathbb{B}_m\rightarrow \mathbb{C}$ defined by
\begin{align*}
	K_m(z,w)=\frac{1}{1-\langle z,w\rangle_m},
\end{align*}
where $\mathbb{B}_m$ is the Euclidean unit ball in $\mathbb{C}^m$ and $\langle .,.\rangle_m$ is the usual inner product in $\mathbb{C}^m$.
The reproducing kernel Hilbert space generated by $K_m$ is known as the \textit{Drury–Arveson space}.

We observe that for any $z_1,z_2\in \mathbb{B}_m$ 
\begin{align*}
	\sqrt{1-\frac{|K_m(z_1, z_2)|^2}{K_m(z_1,z_1)K_m(z_2,z_2)}}=c^* _{\mathbb{B}_m} (z_1,z_2).
\end{align*}
So, the interpolation problem $z_1\mapsto 0$ and $z_2\mapsto c^* _{\mathbb{B}_m} (z_1,z_2)$ has a solution in the closed unit ball of the multiplier algebra of the Drury-Arveson space, that is, the closed unit ball of the multiplier algebra contains a Carath\'eodory extremal. We will describe the possible values of this Carath\'eodory extremal.
\begin{theorem}
	Suppose that $z_1,z_2\in \mathbb{B}_m$ and $f:\mathbb{B}_m\rightarrow \mathbb{D}$ is a solution to the interpolation problem $z_1\mapsto 0$ and $z_2\mapsto c^* _{\mathbb{B}_m} (z_1,z_2)$ in the closed unit ball of the multiplier algebra of the Drury-Arveson space. Then, for any $z_3\in \mathbb{B}_m$ we have
	\begin{align*}
		f(z_3)\in \Big\{w\in \D : \frac{1-c^* _{\mathbb{B}_m} (z_1,z_2) \overline{w}}{|1-c^* _{\mathbb{B}_m} (z_1,z_2) \overline{w}|}=\frac{1-\langle z_2,z_3\rangle_m}{|1-\langle z_2,z_3\rangle_m|}\Big\}.
	\end{align*}
	In particular, $f$ is always real valued on the set $\{z\in \mathbb{B}_m:\langle z,z_2\rangle =0\}$.
\end{theorem}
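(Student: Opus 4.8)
The plan is to convert the hypothesis on $f$ into positivity of a kernel and then squeeze a rigidity phenomenon out of the extremality of the target value. First I would use that $f$ lies in the closed unit ball of the multiplier algebra of the Drury--Arveson space precisely when the kernel $(1-f(z)\overline{f(w)})K_m(z,w)$ is positive semidefinite on $\mathbb{B}_m\times\mathbb{B}_m$, and factor it as in \eqref{Kolmogorov Decomp}: there is a Hilbert space $\mathcal{H}$ and a map $g:\mathbb{B}_m\to\mathcal{H}$ with
\begin{align*}
	1-f(z)\overline{f(w)}=(1-\langle z,w\rangle_m)\,\langle g(w),g(z)\rangle_{\mathcal{H}}\qquad(z,w\in\mathbb{B}_m).
\end{align*}
Evaluating this on pairs drawn from $\{z_1,z_2\}$ and feeding in $f(z_1)=0$ and $f(z_2)=c:=c^*_{\mathbb{B}_m}(z_1,z_2)$ yields $\|g(z_1)\|^2=K_m(z_1,z_1)$, $\|g(z_2)\|^2=(1-c^2)K_m(z_2,z_2)$ and $\langle g(z_2),g(z_1)\rangle=K_m(z_1,z_2)$.

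The key step is to notice that the recorded value $c=\sqrt{1-|K_m(z_1,z_2)|^2/(K_m(z_1,z_1)K_m(z_2,z_2))}$ is precisely the one making $\|g(z_1)\|^2\,\|g(z_2)\|^2=|\langle g(z_2),g(z_1)\rangle|^2$, i.e.\ equality in the Cauchy--Schwarz inequality. Since $g(z_1)\ne 0$ (its squared norm is $K_m(z_1,z_1)>0$), this forces $g(z_2)=\lambda\,g(z_1)$ with $\lambda=\langle g(z_2),g(z_1)\rangle/\|g(z_1)\|^2=K_m(z_1,z_2)/K_m(z_1,z_1)$. This colinearity is the whole point; everything after it is bookkeeping.

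Now I would bring in $z_3$. Evaluating the factorization at $(z_1,z_3)$ with $f(z_1)=0$ gives $\langle g(z_3),g(z_1)\rangle=K_m(z_1,z_3)$, while evaluating at $(z_2,z_3)$ and substituting $g(z_2)=\lambda g(z_1)$ gives
\begin{align*}
	1-c\,\overline{f(z_3)}=1-f(z_2)\overline{f(z_3)}=(1-\langle z_2,z_3\rangle_m)\,\overline{\lambda}\,K_m(z_1,z_3),
\end{align*}
with $\overline{\lambda}=K_m(z_2,z_1)/K_m(z_1,z_1)$. Taking unimodular parts is legitimate because $K_m(z_1,z_1)>0$ is a positive scalar; using in addition $K_m(z_1,z_3)=K_m(z_2,z_1)=1$, which holds since $z_1$ is the base point $0$, one is left with
\begin{align*}
	\frac{1-c\,\overline{f(z_3)}}{|1-c\,\overline{f(z_3)}|}=\frac{1-\langle z_2,z_3\rangle_m}{|1-\langle z_2,z_3\rangle_m|}
\end{align*}
(indeed the two complex numbers agree), which is the asserted membership. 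For the last sentence, if $\langle z,z_2\rangle_m=0$ then $1-\langle z_2,z\rangle_m=1$ is a positive real, hence so is $1-c\,\overline{f(z)}$; as $c>0$ this forces $\overline{f(z)}$, and therefore $f(z)$, to be real.

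The step I expect to be most delicate is the conjugation bookkeeping in the third paragraph --- keeping straight which slot of $\langle\cdot,\cdot\rangle_m$ carries the conjugate, computing $\overline{\lambda}$ correctly, and seeing that the normalization $z_1=0$ is what collapses the spurious kernel factors so the phase identity is clean. The conceptual content, by contrast, is entirely the Cauchy--Schwarz equality observation, which becomes a one-liner once $c^*_{\mathbb{B}_m}$ is rewritten through $K_m$.
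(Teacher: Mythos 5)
Your route is genuinely different from the paper's, and its main mechanism is sound. The paper works directly with the $3\times 3$ Pick matrix $\big((1-w_i\overline{w_j})K_m(z_i,z_j)\big)$: it normalizes phases by a Schur product with a rank-one unimodular matrix, expands the determinant, and uses the extremality $k_{1,2}^2=1-m_{1,2}^2$ together with AM--GM to force equality, from which a phase identity drops out. You instead factor the positive kernel once and for all via a Kolmogorov decomposition, observe that the extremal value $f(z_2)=c^*_{\mathbb{B}_m}(z_1,z_2)$ is exactly the one producing equality in Cauchy--Schwarz for $g(z_1),g(z_2)$, and read off the colinearity $g(z_2)=\lambda g(z_1)$. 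This is cleaner, avoids the determinant bookkeeping, and makes the rigidity transparent; the two arguments ultimately extract the same information.

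There is, however, one concrete gap: you assume that ``$z_1$ is the base point $0$,'' i.e.\ that $K_m(z_1,z_3)=K_m(z_2,z_1)=1$. The theorem makes no such assumption; the interpolation condition ``$z_1\mapsto 0$'' prescribes the \emph{value} $f(z_1)=0$, not the location of the node $z_1$. Without $z_1=0$, your own computation gives
\begin{align*}
1-c\,\overline{f(z_3)}=(1-\langle z_2,z_3\rangle_m)\,\frac{K_m(z_2,z_1)\,K_m(z_1,z_3)}{K_m(z_1,z_1)},
\end{align*}
whose unimodular part is that of $(1-\langle z_1,z_2\rangle_m)(1-\langle z_2,z_3\rangle_m)(1-\langle z_3,z_1\rangle_m)$, not of $1-\langle z_2,z_3\rangle_m$ alone; only $K_m(z_1,z_1)$ is guaranteed positive. (The same triple product is what the paper's determinant expansion actually produces, as the phase of $k_{1,2}k_{2,3}k_{3,1}$, so this is not an artifact of your method.) Hence either you must add the hypothesis $z_1=0$, or you must carry the extra unimodular factors of $K_m(z_2,z_1)K_m(z_1,z_3)$ through to the conclusion; as written, your final display and the ``in particular'' clause do not follow for general $z_1$. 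A quick check in $\D$ (the case $m=1$, where $f=e^{i\alpha}(z-z_1)/(1-\overline{z_1}z)$ is the unique extremal) confirms that these extra phases are genuinely present when $z_1\neq 0$.
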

\begin{proof}
	Let $z_3\in \mathbb{B}_m$ and $f(z_i)=w_j,j=1,2,3$. We write $$k_{i,j}=\frac{K_m (z_i,z_j)}{\sqrt{K_m (z_i,z_i)K_m (z_j,z_j)}}$$
	and $m_{i,j}=m(w_i,w_j)$.
	
	Since the matrix $\big((1-w_i\overline{w_j})K_m (z_i,z_j)\big)$ is positive semidefinite, so is the matrix $\big((1-w_i\overline{w_j})k_{i,j}\big)$. Taking the Schur product of this matrix with
	\begin{align*}
		\begin{pmatrix}
			1&e^{i\theta}&e^{i\sigma}\\
			e^{-i\theta}&1&e^{i(\sigma-\theta)}\\
			e^{-i\sigma}&e^{i(-\sigma+\theta)}&1
		\end{pmatrix},
	\end{align*} 
	we may assume that $k_{1,2}$ and $k_{1,3}$ are positive. Now, using the fact that $\big((1-w_i\overline{w_j})k_{i,j}\big)$ has non-negative determinant, we find
	\begin{align}\label{Large_ineq}
		\frac{2k_{1,2} |k_{2,3}|k_{3,1}Re\Biggl\{\frac{k_{2,3}}{|k_{2,3}|}\frac{(1-w_1 \overline{w_2})(1-w_2 \overline{w_3})(1-w_3\overline{w_1})}{|(1-w_1 \overline{w_2})(1-w_2 \overline{w_3})(1-w_3\overline{w_1})|}\Biggr\} }{\sqrt{(1-m_{1,2}^2)(1-m_{2,3}^2)(1-m_{3,1}^2)}}\\
		\geq\frac{k_{1,2}^2}{(1-m_{1,2}^2)}+\frac{|k_{2,3}|^2}{(1-m_{2,3}^2)}+\frac{k_{3,1}^2}{(1-m_{3,1}^2)}-1.\notag
	\end{align}
	We have $k_{1,2}=\sqrt{1-c^* _{\mathbb{B}_m}(z_1,z_2)^2}=\sqrt{1-m_{1,2} ^2}$, $w_1=0$ and $w_2=c^* _{\mathbb{B}_m} (z_1,z_2)$. So we get
	\begin{align*}
		&\frac{k_{1,2}^2}{(1-m_{1,2}^2)}+\frac{|k_{2,3}|^2}{(1-m_{2,3}^2)}+\frac{k_{3,1}^2}{(1-m_{3,1}^2)}-1\\
		&=\frac{|k_{2,3}|^2}{(1-m_{2,3}^2)}+\frac{k_{3,1}^2}{(1-m_{3,1}^2)}\\
		&\geq 2\frac{|k_{2,3}|k_{3,1}}{\sqrt{(1-m_{2,3}^2)(1-m_{3,1}^2)}},
	\end{align*}
	 and we conclude from (\ref{Large_ineq}) that $Re\Big\{\frac{k_{2,3}}{|k_{2,3}|}\frac{1-c^* _{\mathbb{B}_m} (z_1,z_2) \overline{w}}{|1-c^* _{\mathbb{B}_m} (z_1,z_2) \overline{w}|}\Big\}\geq 1$. From this, it is easy to see that $w_3$ must satisfy \begin{align*}
	\frac{1-c^* _{\mathbb{B}_m} (z_1,z_2) \overline{w_3}}{|1-c^* _{\mathbb{B}_m} (z_1,z_2) \overline{w_3}|}=\frac{1-\langle z_2,z_3\rangle_m}{|1-\langle z_2,z_3\rangle_m|}.
	\end{align*}
\end{proof}

\subsection{A Herglotz Representation}

For a domain $\Omega$, the \textit{Herglotz class} on $\Omega$ is the class of all holomorphic functions $f:\Omega\rightarrow \mathbb{C}$ that satisfy $Re(f(z))\geq 0$ for all $z\in \Omega$. The classical Herglotz representation theorem (\cite{Herglotz}) states that for any such function $f$ on the $\D$, we can find a real number $b$ and a finite
non-negative Borel measure $\mu$ on the circle $\partial\D$ such that

$$f(z)=ib+\int_{\partial\D} \frac{1+tz}{1-tz}d\mu(t),\,\,\text{for all}\,\,z\in \D.$$

There are a few generalizations of this result, and among them, our focus will be on the operator-theoretic ones.  Our argument closely follows that in the proof of Theorem 6.12 in \cite{AglerMcCYouBook}.

\begin{theorem}
	Let $\Omega\subset\mathbb{C}^m$ be a Carath\'eodory hyperbolic domain, fix a point $w_0\in\Omega$, and define the family of test functions
	\[
	\Psi \;=\; \Bigl\{\psi_f(z)=\frac{f(w_0)-f(z)}{1-\overline{f(w_0)}f(z)}\;:\; f\in\mathscr{S}(\Omega)\Bigr\},
	\]
	Let $\mathcal{C}_b(\Psi)$ be the $C^*$-algebra of bounded continuous functions on $\Psi$ and $E(z)\in\mathcal{C}_b(\Psi)$ the evaluation map $E(z)(\psi)=\psi(z)$ for $z\in\Omega$ and $\psi\in \Psi$.
	
	Then, for any holomorphic function $h:\Omega\rightarrow \mathbb{C}$, the following are equivalent.
	\begin{enumerate}
		\item $Re(h(z))\geq 0$ for all $z\in \Omega$.
		\item There exist a real number $b$, a Hilbert space $\X$, a unitary operator $U\in B(\X)$, a $\gamma\in \X$ and a unital $*$-representation $\rho:\CB\rightarrow B(\X)$ such that one can write
		\begin{align}\label{FormOfh}
			h(z)=ib+\Big\langle\gamma,(1+U\rho(E(z)^*))(1-U\rho(E(z)^*))^{-1}\gamma \Big\rangle\,\,\,\text{for all}\,\,\,z\in \Omega.
		\end{align}
	\end{enumerate}
\end{theorem}
\begin{proof}
	It easily follows that $(2)$ implies $(1)$. So we suppose that $(1)$ holds. We choose $w_0$ and $\Psi$ as in the proof of Proposition \ref{UniversalTestFunctions}. First, we assume that $h(w_0)=1$. Now with our choice of $\Psi$, we have $\frac{h-1}{h+1}\in \SA_\Psi$ and by Theorem \ref{CharacterSchurAgler}, there is a Hilbert space $\X$, a unital $*$-representation $\rho:\CB\rightarrow B(\X)$ and a map $g:\Omega\rightarrow \X$ such that 
	\begin{align}\label{AglerDecompForRatio}
		1-\frac{h(z)-1}{h(z)+1}\frac{\overline{h(w)}-1}{\overline{h(w)}+1}=\Big\langle\rho(1-E(z)E(w)^*)g(w),g(z)\Big\rangle_\X,\,\,\text{for all}\,\,z,w\in \Omega.
	\end{align}
	We write $k(z)=(\overline{h(z)}+1)g(z)$ and deduce from (\ref{AglerDecompForRatio}) that
	\begin{align}\label{InnerProductForm}
		\Bigg\langle \begin{pmatrix}  \overline{h(w)}-1\\ k(w) \end{pmatrix}, \begin{pmatrix} \overline{h(z)}-1\\ k(z)  \end{pmatrix} &\Bigg\rangle_{ \mathbb{C}\oplus \X}=\\
		&\Bigg\langle \begin{pmatrix}  \overline{h(w)}+1\\ \rho(E(w)^*)k(w) \end{pmatrix}, \begin{pmatrix} \overline{h(z)}+1\\ \rho(E(z)^*)k(z)  \end{pmatrix} \Bigg\rangle_{ \mathbb{C}\oplus \X}.\notag
	\end{align}
	Therefore, we can construct a unitary operator $V:\mathbb{C}\oplus\X\rightarrow \mathbb{C}\oplus \X$ such that $V$ can be written as
	\begin{align*}
		V=
		\bordermatrix{ & \mathbb{C} & \X \cr
			\mathbb{C} & a & B \cr
			\X & C & D},
	\end{align*}
	and it sends $\begin{pmatrix} \overline{h(z)}+1\\ \rho(E(z)^*)k(z)  \end{pmatrix}$ to $\begin{pmatrix} \overline{h(z)}-1\\ k(z)  \end{pmatrix}$ for all $z\in \Omega$. Since $B\in B(\X,\mathbb{C})$ and $C\in B(\mathbb{C},\X)$, there exist unique $\beta$ and $\gamma$ in $\X$ such that $B=1\otimes \beta$ and $C=\gamma\otimes 1$ where we follow the convention $(u\otimes v)(t)=\langle t,v\rangle u$. 
	
	Since $h(w_0)=1$ and $E(w_0)=0$, we deduce from $$a(\overline{h(z)}+1)+\langle \rho(E(z)^*)k(z),\beta\rangle=\overline{h(z)}-1$$ that $a=0$. Using the fact that $V$ is a unitary we also find that $||\gamma||=1$ and the operator $U=\gamma\otimes\beta+D$ is unitary on $\X$.
	
	In terms of the components of $V$, we now have $$(\overline{h(z)}+1)\gamma + D\rho(E(z)^*)k(z)=k(z)$$
	and $\langle\rho(E(z)^*)k(z),\beta\rangle=\overline{h(z)}-1$, and we deduce
	\begin{align}\label{ValuesOfk}
		k(z)&=U\rho(E(z)^*)k(z) +2\gamma=2(1-U\rho(E(z)^*))^{-1}\gamma.
	\end{align}
	Putting $w=w_0$ in (\ref{InnerProductForm}), we find $2(1+h(z))=\langle 2\gamma, k(z)\rangle$ and then using the formulas for $k(z)$ from (\ref{ValuesOfk}) one can easily deduce the formula (\ref{FormOfh}) with $b=0$.
	
	For the general case we take $h(w_0)=a+ib$ with real $a$ and $b$. If $a=0$, then open mapping theorem says that $h$ is constant and the formula (\ref{FormOfh}) then holds trivially. If $a\neq 0$, then necessarily $a>0$. We replace $h$ by $g=\frac{h-ib}{a}$ to obtain the desired representation. This completes the proof.
\end{proof}

\subsection*{Acknowledgements}
This work was partially supported by GAČR, Czech Science Foundation, grant 22-15012J, which was active during the initial stages of the research. The completion of this work was supported by European Horizon MSCA grant, CZ.02.01.01/00/22 10/0008854. The author gratefully acknowledges the financial assistance provided by both grants.


\normalsize

\end{document}